%

\documentclass[11pt]{amsart}
\usepackage{amsfonts}
\usepackage{amssymb}
\usepackage{amsmath}
\usepackage{color}

\newtheorem{thm}{Theorem}[section]
\newtheorem{cor}[thm]{Corollary}
\newtheorem{lem}[thm]{Lemma}

\newtheorem{propp}[thm]{Proposition}

\newenvironment{theorem}{\begin{thm}\setcounter{equation}{0}
}{\end{thm}}
\newenvironment{corollary}{\begin{cor}\setcounter{equation}{0}
}{\end{cor}}
\newenvironment{lemma}{\begin{lem}\setcounter{equation}{0}
}{\end{lem}}

\begin{document}
\title{On $n^{th}$ class preserving automorphisms of $n$-isoclinism family}
\author{Surjeet Kour}\thanks{}
\address{Discipline of Mathematics, Indian Institute of Technology, Gandhinagar  382355, India.}
\email{surjeetkour@iitgn.ac.in} 
\keywords{Finite group,   p- group, Class preserving automorphism,
Central automorphism} 
\subjclass[2010]{20D15, 20D45}

\begin{abstract}
Let $G$ be a finite group and $M,N$ be two normal subgroups of $G$. Let $Aut_N^M(G)$ denote the group of all automorphisms of $G$ which fix $N$ element wise and act trivially on $G/M$. Let $n$ be a positive integer. In this article we have shown that if $G$ and $H$ are two $n$-isoclinic groups, then there exists an isomorphism from      
$Aut_{Z_n(G)}^{\gamma_{n+1}(G)}(G)$ to  $Aut_{Z_n(H)}^{\gamma_{n+1}(H)}(H)$, which maps the group of $n^{th}$ class preserving automorphisms of $G$ to the group of $n^{th}$ class preserving automorphisms of $H$. Also, for a nilpotent group of class at most $(n+1)$, with some suitable conditions on $\gamma_{n+1}(G)$, we prove that $Aut_{Z_n(G)}^{\gamma_{n+1}(G)}(G)$ is isomorphic to the group of inner automorphisms of a quotient group of $G$.
\end{abstract}

\maketitle

\section{Introduction} One of the most fundamental problem in group theory is the classification of groups and the notion of isomorphism between two groups has played a very important role in it. However, the notion of isomorphism which partitions the class of all groups into equivalence classes(isomorphism classes)is very strong. If one would like to have all abelian groups or all finite $p$-groups fall into one equivalence class then the notion of isomophism will not work. With these thoughts in mind, Hall in 1940 defined a more general equivalence relation in the class of all groups which could give a satisfactory classification of all finite $p$-groups and all abelian groups. In 1940 in \cite{Hall 1}, Hall introduced the notion of isoclinism, which is an equivalence relation in the class of all groups. The notion of isoclinism is weaker than isomorphism and all abelian groups form one equivalence class. Roughly speaking, two groups are isoclinic if their central quotients are isomorphic and their commutator subgroups are also isomorphic. 
More precisely, if $G$ and $H$ are two groups, we say, $G$ is isoclinic to $H$ if there exists an isomorphism $\alpha : G/Z(G) \rightarrow H/Z(H)$  and an isomorphism $\beta : \gamma_{2}(G) \rightarrow \gamma_{2}(H)$  such that the following diagram commutes.\\

\begin{center}
$
\begin{matrix}

 \frac{G}{Z(G)}\times  \frac{G}{Z(G)})          & \xrightarrow{\alpha \times \alpha}   & \frac{H}{Z(H)}\times  \frac{H}{Z(H)}\\                    
  & & \\
  \downarrow{\gamma_{G}}  &       & \downarrow{\gamma_{H}}\\
  &  & \\
 \gamma_{2}(G)        & \xrightarrow{\beta} & \gamma_{2}(H)              ,

\end{matrix}
$
\end{center}

where $Z(G)$ and $\gamma_2(G)$ denote the center of $G$ and the commutator subgroup of $G$ respectively. The maps $\gamma_G$ and $\gamma_H$ are given by $$\gamma_{G}(g_1Z(G), g_2Z(G)) = [g_1, g_2]$$ for all $g_1, g_2 \in G$ and   $$\gamma_{H}(h_1Z(H), h_2Z(H)) = [h_1, h_2]$$ for all $h_1,h_2 \in H$.

 Notion of isoclinism has played a very important role in the classification of finite $p$-groups. Isoclinism divides $p$-groups into various isoclinism families and it has been studied by many mathematicians (see \cite{Hall 1, Hall 2}). Many authors investigated various group theoretical properties which are invariant under the isoclinism. Later, Hall in \cite{Hall 2}, generalized the notion of isoclinism to that of isologism, which is in fact an isoclinism with respect to certain varieties of groups. 

Hekster in 1986 in \cite{Hekster}, conceptualized and studied the notion of $n$-isoclinism over the variety of all nilpotent groups of class at most n. In this article, he has also done an extensive study of group theoretical properties which are invariant under $n$-isoclinism. He has shown that most of the known results for isoclinism can carry over to $n$-isoclinism. In recent years, people also started study of various subgroups of the automorphism group, which are invariant(isomorphic) for isoclinic groups(see \cite{Manoj, Rai}).

Let $M$ and $N$ be two normal subgroups of $G$. Let $Aut^{M}(G)$ denote the group of all automorphisms of $G$ which fix $M$ set wise and act trivially on $G/M$ and let $Aut_{N}(G)$ denote the group of all automorphisms of $G$ which fix  $N$ element wise. The group $Aut^{M}(G)\cap Aut_{N}(G)$ is denoted by $Aut_{N}^{M}(G)$. Let $n$ be a positive integer. An automorphism $\theta$ of $G$ is called $n^{th}$ class preserving if for each $g\in G$, there exists $x\in \gamma_{n}(G)$ such that $\theta(g)= x^{-1}gx$,
where $\gamma_n(G)$ denotes the $n^{th}$ term of the lower central series. We denote the group of all $n^{th}$ class preserving automorphisms by $Aut_{c}^{n}(G)$. Note that $Aut_c^1(G)$ is denoted by $Aut_c(G)$ and called the group of all class preserving automorphisms.

In \cite{Manoj}, Yadav proved that if $G$ and $H$ are two finite non abelian isoclinic groups, then $Aut_c(G)$  is isomorphic to $Aut_c(H)$. In \cite{Rai}, Rai extended Yadav's result to the group $Aut_{Z(G)}^{\gamma_2(G)}(G)$. He proved that, if $G$ and $H$ are two finite non abelian isoclinic groups then there exists an isomorphism $ \phi : Aut_{Z(G)}^{\gamma_2(G)}(G)\rightarrow Aut_{Z(H)}^{\gamma_2(H)}(H)$ such that $ \phi(Aut_c(G))= Aut_c(H)$. In this article we study these subgroups of the automorphism group for an $n$-isoclinism family. More precisely, in Theorem \ref{t4}, 
 we prove that if $G$ and $H$ are two finite  $n$-isoclinic groups then there exists an isomorphism $\Psi : Aut_{Z_n(G)}^{\gamma_{n+1}(G)}(G) \rightarrow Aut_{Z_n(H)}^{\gamma_{n+1}(H)}(H)$ such that $\Psi(Aut_c^n(G)) = Aut_c^n(H)$. Rai \cite [Theorem A ]{Rai} , the extension of Yadav \cite[Theorem 4.1]{Manoj},  is obtained as Corollary \ref{c5} of Theorem \ref{t4}.

In the same article \cite{Rai}, Rai also proved that if $G$ is a finite $p$-group of nilpotency class two, then $ Aut_{Z(G)}^{\gamma_2(G)}(G) = Inn(G)$ if and only if $\gamma_2(G)$ is cyclic. In section 3, we generalize this result in the following two ways. 

\begin{enumerate}
\item We consider an arbitrary nilpotent group(not just a finite $p$-group).
\item We study $ Aut_{Z_n(G)}^{M}(G)$ for a central subgroup $M$ of $G$ which contains $\gamma_{n+1}(G)$.
\end{enumerate}

In Theorem \ref{t5},  we prove that if $G$ is a finite non abelian group with a central subgroup $M$ such that $\gamma_{n+1}(G) \subseteq M$, then
$$ Aut_{Z_n(G)}^M(G) \simeq Inn(G/Z_{n-1}(G))$$ if and only if $M_{p_i}$ is cyclic for each $p_i \in \pi(G/ Z_{n-1}(G))$, where $M_{p_i}$ denotes the $p_i$-primary component of $M$. We obtain Rai \cite[Theorem B(2)]{Rai} as Corollary \ref{c2} of Theorem \ref{t5}.

\section{{Notations and Preliminaries}}\label{s2}
In this section, we recall a few definitions and some known results. Let $G$ be a finite group. We denote the identity of a group by 1. Throughout the article n denotes an integer and p denotes a prime number. The lower central series of a group $G$ is the series; $ \gamma_1(G) \geq \gamma_2(G) \geq \cdots$ defined as $\gamma_1(G)=G$ and  $\gamma_{n+1}(G) = [\gamma_n(G), G]$ for all $n \geq 1$. Note  that each $\gamma_i(G)$ is a characteristic subgroup of $G$ and $\gamma_2(G)$ is called the commutator subgroup of $G$ . $G$ is a nilpotent group of class $n$ if and only if $\gamma_{n+1}(G)= \{1\}$ and $\gamma_{n}(G)\neq \{1\}$.

The upper central series of $G$ is a sequence of normal subgroups $\{1\}= Z_0(G) \leq Z_1(G) \leq Z_2(G) \leq \cdots$ such that $Z_i(G)/Z_{i-1}(G) = Z(G/Z_{i-1}(G))$. From the definition of upper central series it follows that $ g \in Z_n(G)$ if and only if $[g, g_1, \ldots, g_n]= 1$, where $g_i \in G$ for $1 \leq i \leq n$. Hence $[\gamma_n(G), Z_n(G)]=\{1\}$. With this observation, it is easy to see that there exists a natural well defined map \\
$$\gamma(n, G): \frac{G}{Z_n(G)} \times \cdots \times \frac{G}{Z_n(G)} \rightarrow \gamma_{n+1}(G) $$ given by 
$$\gamma(n, G)(g_1Z_n(G), \ldots ,g_{n+1}Z_n(G))= [g_1, g_2, \ldots , g_{n+1}].$$

Now we recall the notions of $n$-homoclinism and $n$-isoclinism which are introduced by Hall \cite{Hall 2} and extensively investigated by Hekster \cite{Hekster}.

Let $G$ and $H$ be two finite groups. A pair $(\alpha, \beta)$ is called $n$-homoclinism between $G$ and $H$, if $\alpha : G/Z_n(G) \rightarrow H/Z_n(H)$ is a group homomorphism and $\beta : \gamma_{n+1}(G) \rightarrow \gamma_{n+1}(H)$ is a group homomorphism such that the following diagram commutes\\

\begin{center}
$
\begin{matrix}

 \frac{G}{Z_n(G)}\times \cdots \times \frac{G}{Z_n(G)})          & \xrightarrow{\alpha^{n+1}}   & \frac{H}{Z_n(H)}\times \cdots \times \frac{H}{Z_n(H)}\\                    
  & & \\
  \downarrow{\gamma(n,G)}  &       & \downarrow{\gamma(n, H)}\\
  &  & \\
 \gamma_{n+1}(G)        & \xrightarrow{\beta} & \gamma_{n+1}(H).              

\end{matrix}
$
\end{center}
\vspace{4.5mm}

Observe that the notion of $n$-homoclinism generalizes the notion of homomorphism. In fact $0$-homoclinisms are nothing but homomorphisms. If $\alpha$ is surjective then $\beta$ is surjective and if $\beta$ is injective then $\alpha$ is injective. We say $G$ and $H$ are $n$-isoclinic if $\alpha$ and $\beta$ are isomorphisms. In this case, the pair $(\alpha, \beta)$ is called $n$-isoclinism between $G$ and $H$.

Note that $n$-isoclinism is an equivalence relation on the class of all groups and an equivalence class is called an $n$-isoclinism family. 

\vspace{4.5mm}

Now we recall some well known results on $n$-isoclinism($n$-homoclinism).

\begin{lemma}\cite[Lemma 3.8]{Hekster}\label{l1}
Let $(\alpha, \beta)$ be an $n$-homoclinism from $G$ to $H$ and let $x \in \gamma_{n+1}(G)$. Then the following statements hold:
\begin{enumerate}
\item $\alpha(xZ_n(G)) = \beta(x)Z_n(H)$.
\item For $g \in G$ and $h \in \alpha(gZ_n(G))$, $\beta(g^{-1}xg)=h^{-1}\beta(x)h$.
\end{enumerate}
\end{lemma}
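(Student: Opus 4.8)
The plan is to reduce both statements to the case in which $x$ is a single left-normed commutator of weight $n+1$, and then to read off the conclusion from the commutativity of the square defining the $n$-homoclinism $(\alpha,\beta)$.

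First I would record the reduction. Since $\gamma_{n+1}(G)=[\gamma_n(G),G]$, a standard commutator-calculus induction (using $[ab,g]=[a,g]^b[b,g]$ together with the fact that $[g_1,\ldots,g_{n+1}]^b=[g_1^{b},\ldots,g_{n+1}^{b}]$ is again a weight-$(n+1)$ commutator) shows that $\gamma_{n+1}(G)$ is generated by the elements $[g_1,\ldots,g_{n+1}]$ with $g_i\in G$. In each claimed identity both sides are multiplicative in $x$: the maps $x\mapsto\alpha(xZ_n(G))$ and $x\mapsto\beta(x)Z_n(H)$ of part (1), and the maps $x\mapsto\beta(g^{-1}xg)$ and $x\mapsto h^{-1}\beta(x)h$ of part (2), are all homomorphisms on $\gamma_{n+1}(G)$ because $\alpha$ and $\beta$ are homomorphisms. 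Hence it suffices to verify each identity on a generator $x=[g_1,\ldots,g_{n+1}]$.

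For part (1), I would pick $h_i\in H$ with $h_iZ_n(H)=\alpha(g_iZ_n(G))$. As $\alpha$ is a homomorphism it preserves iterated commutators, and since $xZ_n(G)=[g_1Z_n(G),\ldots,g_{n+1}Z_n(G)]$ we get
\[
\alpha(xZ_n(G))=\alpha\big([g_1Z_n(G),\ldots,g_{n+1}Z_n(G)]\big)=[h_1,\ldots,h_{n+1}]Z_n(H),
\]
while commutativity of the diagram gives $\beta(x)=\gamma(n,H)\big(\alpha(g_1Z_n(G)),\ldots,\alpha(g_{n+1}Z_n(G))\big)=[h_1,\ldots,h_{n+1}]$; comparing the two yields $\alpha(xZ_n(G))=\beta(x)Z_n(H)$. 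For part (2), I would use that conjugation by $g$ is an automorphism, so $g^{-1}xg=[g^{-1}g_1g,\ldots,g^{-1}g_{n+1}g]$ is again a weight-$(n+1)$ commutator; applying commutativity and the homomorphism property of $\alpha$ lets me take the coset representatives of $\alpha\big((g^{-1}g_ig)Z_n(G)\big)$ to be $h^{-1}h_ih$, so that $\beta(g^{-1}xg)=[h^{-1}h_1h,\ldots,h^{-1}h_{n+1}h]=h^{-1}[h_1,\ldots,h_{n+1}]h=h^{-1}\beta(x)h$.

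The step I expect to demand the most care is the reduction to generators: one must check that conjugates and inverses of weight-$(n+1)$ commutators remain weight-$(n+1)$ commutators (so that the multiplicativity argument genuinely closes on the generating set), and one must track coset representatives consistently while moving between $G/Z_n(G)$, $H/Z_n(H)$ and the honest commutator values living in $\gamma_{n+1}(H)$. Once these bookkeeping points are settled, both identities are immediate consequences of the commuting square.
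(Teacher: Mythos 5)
Your proof is correct. The paper gives no proof of this lemma---it is quoted with a citation to Hekster's Lemma 3.8---and your argument (reduce to left-normed commutators $[g_1,\ldots,g_{n+1}]$ generating $\gamma_{n+1}(G)$, then read both identities off the commuting square defining the $n$-homoclinism) is precisely the standard argument behind the cited result; the only remark worth adding is that the closure of the generating set under conjugation and inversion, which you flag as the delicate step, is not actually needed for the reduction, since in each part both sides are homomorphisms in $x$ on $\gamma_{n+1}(G)$, and two homomorphisms agreeing on any generating set agree everywhere.
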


We also recall the following theorem of Hekster.
\begin{theorem}\cite[Theorem 3.12]{Hekster}\label{t1}
Let $(\alpha, \beta)$ be an $n$-isoclinism from $G$ to $H$. Then for all $i \geq 0$,  $~~\beta(\gamma_{n+1}(G) \cap Z_i(G)) = \gamma_{n+1}(H) \cap Z_i(H)$.
\end{theorem}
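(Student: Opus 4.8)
The plan is to reduce everything to the commutator characterization of the upper central series recorded in Section \ref{s2}: for any group $K$ and any $i \geq 0$, an element $z \in K$ lies in $Z_i(K)$ if and only if $[z, k_1, \ldots, k_i] = 1$ for all $k_1, \ldots, k_i \in K$. Granting this, and using that $\beta$ is injective while $\alpha$ is surjective, the whole statement will follow once I can transport iterated commutators across $\beta$. Concretely, I would first establish the transfer formula
$$\beta([x, g_1, \ldots, g_i]) = [\beta(x), h_1, \ldots, h_i]$$
valid for every $x \in \gamma_{n+1}(G)$, all $g_1, \ldots, g_i \in G$, and any representatives $h_j \in \alpha(g_j Z_n(G))$.

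This formula I would prove by induction on $i$ using Lemma \ref{l1}(2). For $i = 1$, write $[x, g_1] = x^{-1} x^{g_1}$; since both factors lie in the normal subgroup $\gamma_{n+1}(G)$ I may apply $\beta$ as a homomorphism, and Lemma \ref{l1}(2) gives $\beta(x^{g_1}) = \beta(x)^{h_1}$, whence $\beta([x, g_1]) = \beta(x)^{-1}\beta(x)^{h_1} = [\beta(x), h_1]$. For the inductive step, note that $w := [x, g_1, \ldots, g_{i-1}]$ again lies in $\gamma_{n+1}(G)$, so applying the case $i=1$ to $w$ and $g_i$ and then invoking the inductive hypothesis for $\beta(w)$ yields $\beta([w, g_i]) = [\beta(w), h_i] = [\beta(x), h_1, \ldots, h_i]$.

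With the transfer formula in hand, fix $x \in \gamma_{n+1}(G)$, so that $\beta(x) \in \gamma_{n+1}(H)$ automatically. Combining the commutator characterization of $Z_i$, the injectivity of $\beta$, and the surjectivity of $\alpha$ (which ensures that as the $g_j$ range over $G$ the representatives $h_j$ range over all of $H$), I obtain the equivalences: $x \in Z_i(G)$ iff $[x, g_1, \ldots, g_i] = 1$ for all $g_j \in G$, iff $\beta([x, g_1, \ldots, g_i]) = 1$ for all $g_j$, iff $[\beta(x), h_1, \ldots, h_i] = 1$ for all $h_j \in H$, iff $\beta(x) \in Z_i(H)$. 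Thus $x \in \gamma_{n+1}(G) \cap Z_i(G)$ precisely when $\beta(x) \in \gamma_{n+1}(H) \cap Z_i(H)$, and since $\beta$ restricts to a bijection of $\gamma_{n+1}(G)$ onto $\gamma_{n+1}(H)$ this gives $\beta(\gamma_{n+1}(G) \cap Z_i(G)) = \gamma_{n+1}(H) \cap Z_i(H)$. The base case $i = 0$ is immediate since $Z_0 = \{1\}$ and $\beta(1) = 1$.

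The only genuinely delicate point is the interplay of surjectivity and injectivity in the final chain: I must use that $\alpha$ is onto so the tuples $(h_1, \ldots, h_i)$ exhaust $H^i$ --- otherwise I would only be testing $\beta(x)$ against a subset of $H$ --- and that $\beta$ is one-to-one to equate the vanishing of a commutator with that of its image. This is exactly where the full force of $n$-isoclinism (both $\alpha$ and $\beta$ isomorphisms) enters, whereas the transfer formula itself requires only an $n$-homoclinism.
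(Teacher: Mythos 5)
Your proof is correct, but note that there is no in-paper proof to compare it against: the paper quotes this statement verbatim from Hekster \cite[Theorem 3.12]{Hekster} without reproducing an argument. Your derivation is a clean, self-contained proof built from exactly the two ingredients the paper does supply, namely Lemma \ref{l1}(2) and the commutator characterization of the upper central series recorded in Section \ref{s2} (an element $z$ lies in $Z_i(K)$ if and only if every left-normed commutator $[z,k_1,\ldots,k_i]$ is trivial). The induction establishing the transfer formula $\beta([x,g_1,\ldots,g_i]) = [\beta(x),h_1,\ldots,h_i]$ is sound; the key point making the inductive step legitimate --- that $w = [x,g_1,\ldots,g_{i-1}]$ remains in the normal subgroup $\gamma_{n+1}(G)$, so that $\beta$ and Lemma \ref{l1}(2) still apply to it --- is explicitly noted. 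Your bookkeeping in the final chain of equivalences is also complete: surjectivity of $\alpha$ to ensure the tuples $(h_1,\ldots,h_i)$ exhaust $H^i$, injectivity of $\beta$ to pass between triviality of a commutator and triviality of its image, bijectivity of $\beta$ on $\gamma_{n+1}(G)$ to upgrade the elementwise equivalence to the two-sided set equality, and the degenerate case $i=0$. Your closing observation correctly isolates which hypotheses carry which weight (the transfer formula needs only $n$-homoclinism, while the equivalence chain uses the full isomorphism assumptions), which mirrors how Hekster organizes his Lemma 3.8 versus Theorem 3.12; in effect your proposal makes the paper more self-contained than it actually is at this point.
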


The following lemma is an easy observation.
\begin{lemma}\label{l5}
Let $G$ be a group and $n$ be a positive integer. Then for all $n \geq 1$,  $ Aut_c^n(G) \subseteq Aut_{Z_n(G)}^{\gamma_{n+1}(G)}(G)$.
\end{lemma}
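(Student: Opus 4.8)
The plan is to verify directly, for an arbitrary $\theta \in Aut_c^n(G)$, each of the three conditions that together define membership in $Aut_{Z_n(G)}^{\gamma_{n+1}(G)}(G) = Aut^{\gamma_{n+1}(G)}(G) \cap Aut_{Z_n(G)}(G)$: namely that $\theta$ (a) fixes $\gamma_{n+1}(G)$ setwise, (b) acts trivially on $G/\gamma_{n+1}(G)$, and (c) fixes $Z_n(G)$ element-wise. The only input is the definition of an $n^{th}$ class preserving automorphism: for each $g \in G$ there is some $x_g \in \gamma_n(G)$ with $\theta(g) = x_g^{-1}g x_g$. I would first record the elementary identity $x_g^{-1}g x_g = g\,[g,x_g]$, which rewrites the action of $\theta$ in terms of a commutator and isolates the ``correction factor'' $[g,x_g]$ that I can then locate inside the relevant subgroups.

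Condition (a) is immediate, since each $\gamma_i(G)$ is characteristic in $G$, so every automorphism preserves $\gamma_{n+1}(G)$ setwise; thus $\theta \in Aut^{\gamma_{n+1}(G)}(G)$ requires only (b). For (b), I would note that $x_g \in \gamma_n(G)$ forces $[g,x_g] \in [G,\gamma_n(G)] = \gamma_{n+1}(G)$, using the defining relation $\gamma_{n+1}(G) = [\gamma_n(G),G]$. Hence $g^{-1}\theta(g) = [g,x_g] \in \gamma_{n+1}(G)$, so $\theta(g)\gamma_{n+1}(G) = g\gamma_{n+1}(G)$ for every $g$, i.e.\ $\theta$ acts trivially on $G/\gamma_{n+1}(G)$. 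This establishes $\theta \in Aut^{\gamma_{n+1}(G)}(G)$.

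The slightly more substantive step is (c), which is where the upper central series enters. For $z \in Z_n(G)$ choose $x_z \in \gamma_n(G)$ with $\theta(z) = z\,[z,x_z]$. Since $z \in Z_n(G)$ and $x_z \in \gamma_n(G)$, the correction factor lies in $[Z_n(G),\gamma_n(G)]$, and by the identity $[\gamma_n(G),Z_n(G)] = \{1\}$ recorded in Section~\ref{s2} this commutator is trivial. Therefore $\theta(z) = z$ for all $z \in Z_n(G)$, giving $\theta \in Aut_{Z_n(G)}(G)$. Combining (a)--(c) yields $\theta \in Aut_{Z_n(G)}^{\gamma_{n+1}(G)}(G)$, and since $\theta$ was arbitrary the claimed inclusion follows. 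There is no genuine obstacle here; the whole argument rests on correctly invoking the two facts $[\gamma_n(G),G] = \gamma_{n+1}(G)$ and $[\gamma_n(G),Z_n(G)] = \{1\}$, and the only point demanding care is keeping track of the commutator convention so that the rewriting $x^{-1}gx = g[g,x]$ places the correction factor in the intended subgroup.
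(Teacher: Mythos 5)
Your proposal is correct and follows essentially the same route as the paper's own proof: both rewrite $\theta(g)=g[g,x]$ and use $[\gamma_n(G),G]=\gamma_{n+1}(G)$ to get the trivial action on $G/\gamma_{n+1}(G)$, and $[\gamma_n(G),Z_n(G)]=\{1\}$ to get that $Z_n(G)$ is fixed element-wise. Your explicit observation (a), that $\gamma_{n+1}(G)$ is characteristic and hence fixed setwise, is left implicit in the paper but adds nothing beyond (b).
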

\begin{proof}
Let $ f \in Aut_c^n(G)$. Then for each $g \in G$ there exists $x \in \gamma_{n}(G)$ such that $ f(g) = x^{-1}gx$. Hence $g^{-1}f(g)= [g,x]\in \gamma_{n+1}(G)$. Furthermore, if $g \in Z_n(G)$ then $g^{-1}f(g) = [g,x]=1$. Thus $f(g)=g$ for all $g \in Z_n(G)$.
\end{proof}

The exponent of a group $G$ is the smallest positive integer $n$ such that $g^n =1$ for all $g \in G$. We denote exponent of a group $G$ by $exp(G)$. For a finite group $G$, $\pi(G)$ denotes the set of all prime divisors of order of $G$. If $G$ is finite abelian, then $ G = \prod_{p \in \pi(G)} G_p$, where $G_p$ denotes the $p$-primary component of $G$.

The following lemmas are well known.
\begin{lemma}\label{l3}
Let $G$ be a nilpotent group of class $(n+1)$. Then $exp(G/Z_n(G)) = exp(\gamma_{n+1}(G))$.
\end{lemma}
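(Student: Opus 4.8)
The plan is to read off both exponents from the commutator map $\gamma(n,G)\colon (G/Z_n(G))^{n+1}\to\gamma_{n+1}(G)$ recorded above, after first showing that---because $G$ has class $n+1$---this map is multilinear in each of its $n+1$ arguments. Write $e=\exp(G/Z_n(G))$ and $f=\exp(\gamma_{n+1}(G))$; I will prove $f\mid e$ and $e\mid f$ separately. Two standing facts will be used: first, $\gamma_{n+1}(G)\le Z(G)$, since $[\gamma_{n+1}(G),G]=\gamma_{n+2}(G)=\{1\}$, so in particular $\gamma_{n+1}(G)$ is abelian; and second, the characterization already noted in Section~\ref{s2}, that $g\in Z_n(G)$ if and only if $[g,x_1,\dots,x_n]=1$ for all $x_1,\dots,x_n\in G$.

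\emph{Main step (multilinearity).} Here I would expand an iterated commutator $[g_1,\dots,g_{n+1}]$ after inserting a product $g_ig_i'$ into a single slot, using the identities $[ab,c]=[a,c]^b[b,c]$ and $[a,bc]=[a,c][a,b]^c$. Each conjugation occurring in such an expansion replaces a factor by itself times a commutator of strictly larger weight, and after the full $n$ bracketings every such correction term lies in $\gamma_{n+2}(G)=\{1\}$. Hence all conjugations may be discarded and the map splits additively in that slot: $[\dots,g_ig_i',\dots]=[\dots,g_i,\dots][\dots,g_i',\dots]$ for every $i$ and all remaining entries. In particular, raising one entry to an $m$-th power multiplies the value by its $m$-th power, so that $[g_1^m,g_2,\dots,g_{n+1}]=[g_1,\dots,g_{n+1}]^m$, and likewise in any other slot. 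I expect this to be the only delicate point; the rest is formal.

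\emph{Divisibilities.} For $f\mid e$: a typical generator $[g_1,\dots,g_{n+1}]$ of $\gamma_{n+1}(G)$ satisfies $[g_1,\dots,g_{n+1}]^e=[g_1^e,g_2,\dots,g_{n+1}]$ by multilinearity, and $g_1^e\in Z_n(G)$ by definition of $e$, so this commutator is trivial by the characterization of $Z_n(G)$. Since $\gamma_{n+1}(G)$ is abelian and is generated by such commutators, each of order dividing $e$, we obtain $f\mid e$. For $e\mid f$: for arbitrary $g,x_1,\dots,x_n\in G$, multilinearity gives $[g^f,x_1,\dots,x_n]=[g,x_1,\dots,x_n]^f$, and $[g,x_1,\dots,x_n]\in\gamma_{n+1}(G)$ has order dividing $f$, so the right-hand side is $1$. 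As the $x_i$ are arbitrary, the characterization of $Z_n(G)$ forces $g^f\in Z_n(G)$, i.e. $(gZ_n(G))^f=1$; hence $e\mid f$. Combining the two divisibilities gives $e=f$, as required.
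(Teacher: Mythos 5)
The paper states this lemma without proof (``The following lemmas are well known''), so there is no in-paper argument to compare against; what you have written is the standard proof of this standard fact, and it is correct. The crux is exactly where you locate it: in a group of class $n+1$ the left-normed commutator $[g_1,\dots,g_{n+1}]$ is a homomorphism in each slot, because expanding via $[ab,c]=[a,c]^b[b,c]$ and $[a,bc]=[a,c][a,b]^c$ produces only correction terms of weight at least $n+2$, which die in $\gamma_{n+2}(G)=\{1\}$; making this airtight is a routine induction on the number of bracketings (a ``collection''-type argument), and your sketch of it is the right one. Granting multilinearity, both divisibilities go through as you say: $[g_1,\dots,g_{n+1}]^e=[g_1^e,g_2,\dots,g_{n+1}]=1$ because $g_1^e\in Z_n(G)$ and because of the characterization of $Z_n(G)$ quoted in Section~\ref{s2}, and since $\gamma_{n+1}(G)\le Z(G)$ is abelian this bounds the exponent of $\gamma_{n+1}(G)$ --- though note you are implicitly using the standard fact that $\gamma_{n+1}(G)$ is \emph{generated} by left-normed commutators of weight $n+1$, which deserves an explicit citation or one-line proof in a final write-up; conversely $[g^f,x_1,\dots,x_n]=[g,x_1,\dots,x_n]^f=1$ for all $x_i$ forces $g^f\in Z_n(G)$, giving $e\mid f$. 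The only items to tighten are (i) the induction behind multilinearity, which you present as a plan rather than a proof, and (ii) the generation fact just mentioned; neither is a gap in the idea, and the overall argument is sound.
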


\begin{lemma}\label{l4}
Let $G$ and $H$ be two finite groups such that $exp
(H) \mid exp(G)$. Then $\pi(H) \subseteq \pi(G)$.
\end{lemma}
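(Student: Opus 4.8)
The plan is to reduce the statement to the elementary fact that, for any finite group $K$, the set of primes dividing $|K|$ coincides with the set of primes dividing the integer $\exp(K)$. Throughout I will write $\pi(m)$ for the set of prime divisors of a positive integer $m$, so that the assertion $\pi(K)=\pi(\exp(K))$ makes sense. Once this identity is established for both $G$ and $H$, the divisibility hypothesis $\exp(H)\mid\exp(G)$ transfers directly to the desired inclusion of prime sets.

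First I would show $\exp(K)\mid|K|$ for any finite group $K$. By Lagrange's theorem the order of every element of $K$ divides $|K|$, and since $\exp(K)$ is the least common multiple of these element orders (each a divisor of $|K|$), it too divides $|K|$. In particular every prime dividing $\exp(K)$ divides $|K|$, so $\pi(\exp(K))\subseteq\pi(K)$. For the reverse inclusion I would invoke Cauchy's theorem: if a prime $p$ divides $|K|$, then $K$ contains an element of order $p$, whence $p\mid\exp(K)$. Thus $\pi(K)\subseteq\pi(\exp(K))$, and combining the two inclusions gives $\pi(K)=\pi(\exp(K))$.

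Finally I would assemble the argument. Applying the identity to $H$ gives $\pi(H)=\pi(\exp(H))$, and applying it to $G$ gives $\pi(\exp(G))=\pi(G)$. Since $\exp(H)\mid\exp(G)$, every prime dividing $\exp(H)$ also divides $\exp(G)$, that is, $\pi(\exp(H))\subseteq\pi(\exp(G))$. Chaining these relations yields $\pi(H)=\pi(\exp(H))\subseteq\pi(\exp(G))=\pi(G)$, as required. There is no genuine obstacle here: the only point that calls on an external input is Cauchy's theorem, used for the reverse inclusion $\pi(K)\subseteq\pi(\exp(K))$, and everything else is routine bookkeeping with divisibility.
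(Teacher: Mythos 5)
Your proof is correct and complete: the paper states this lemma without proof, labelling it ``well known,'' and your argument via Lagrange's theorem (giving $\exp(K)\mid |K|$, hence $\pi(\exp(K))\subseteq\pi(K)$) and Cauchy's theorem (giving the reverse inclusion $\pi(K)\subseteq\pi(\exp(K))$) is precisely the standard reasoning the paper implicitly relies on. The chaining step $\pi(H)=\pi(\exp(H))\subseteq\pi(\exp(G))=\pi(G)$ is routine and correct, so there is nothing to add.
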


A subgroup $H$ of $G$ is called central if $H \subseteq Z(G)$. For an abelian group $H$, $Hom(G,H)$ denotes the abelian group of all homomorphisms from $G$ to $H$. Now we recall the following theorem from \cite{kour}.

\begin{theorem}\cite[Theorem 3.3]{kour}\label{t2}
Let $G$ be a group and let $M$ and $N$ be two normal subgroups of $G$. Suppose, $M$ is a central subgroup of $G$. Then the following statements are true:
\begin{enumerate}
\item For each $ f \in Aut_N^M(G)$, the map $\alpha_f : G/N \rightarrow M $ given by $\alpha_f(gN) = g^{-1}f(g)$ is well
defined and it is a homomorphism.

\item If $G$ is finite and $M \subseteq N$, then the map $\phi : Aut_N^M(G) \rightarrow Hom(G/N , M)$ defined by
$\phi(f) =  \alpha_f $ is an isomorphism.
\end{enumerate}
\end{theorem}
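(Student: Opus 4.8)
The plan is to treat the two parts separately, in both cases exploiting the two defining properties of an $f \in Aut_N^M(G)$: that $g^{-1}f(g) \in M$ for every $g \in G$ (because $f$ acts trivially on $G/M$, so $f$ also fixes $M$ setwise) and that $f$ restricts to the identity on $N$. The centrality of $M$ is the lever throughout part (1), while the containment $M \subseteq N$ becomes essential in part (2).

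For part (1), I would first observe that $\alpha_f$ genuinely takes values in $M$, which is immediate from $f \in Aut^M(G)$. To verify well-definedness, I write $g_2 = g_1 n$ with $n \in N$ and compute $g_2^{-1}f(g_2) = n^{-1}\bigl(g_1^{-1}f(g_1)\bigr)n$, using that $f(n)=n$; since $g_1^{-1}f(g_1) \in M \subseteq Z(G)$ this conjugation is trivial, so the value depends only on the coset $gN$. The homomorphism property $\alpha_f(g_1 g_2 N) = \alpha_f(g_1 N)\alpha_f(g_2 N)$ follows by the same mechanism: I expand $(g_1 g_2)^{-1}f(g_1 g_2)$ and slide the central element $g_1^{-1}f(g_1)$ past $f(g_2)$, then invoke commutativity of $M$. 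These are short computations whose only content is the centrality of $M$.

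For part (2), the substantive claims are that $\phi$ is a group homomorphism, injective, and surjective; bijectivity of the constructed preimage will follow from finiteness of $G$. To see that $\phi$ is a homomorphism, I compute $\alpha_{f \circ h}(xN) = x^{-1}f(h(x))$. Writing $h(x) = x\,\alpha_h(xN)$ with $\alpha_h(xN) \in M \subseteq N$, I apply $f$ and use that $f$ fixes $N$ pointwise to obtain $f(h(x)) = f(x)\,\alpha_h(xN)$, whence $\alpha_{f\circ h} = \alpha_f\,\alpha_h$ as a pointwise product in the abelian group $M$. This is the first point where the hypothesis $M \subseteq N$ is genuinely needed, and I expect this interplay of the two hypotheses to be the crux of the argument. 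Injectivity is immediate, since $\alpha_f$ trivial forces $f(g)=g$ for all $g$.

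For surjectivity, given $\psi \in Hom(G/N, M)$ I would define $f(x) = x\,\psi(xN)$ and check that $f$ lies in $Aut_N^M(G)$ with $\alpha_f = \psi$. Verifying that $f$ is a homomorphism again uses centrality of $M$ to interchange factors; verifying that $f$ fixes $N$ pointwise and fixes $M$ uses $M \subseteq N$, so that $\psi$ vanishes on the relevant cosets; and injectivity of $f$ follows because $f(x)=1$ forces $x = \psi(xN)^{-1} \in M \subseteq N$, making $xN$ the trivial coset and hence $x=1$. Finiteness of $G$ then promotes injectivity to bijectivity, so $f \in Aut_N^M(G)$ and $\phi(f)=\psi$. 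The main obstacle, such as it is, is bookkeeping: ensuring that each defining property of $Aut_N^M(G)$ is checked for the constructed $f$, and that the two hypotheses, centrality of $M$ and $M \subseteq N$, are invoked at exactly the steps where each is indispensable.
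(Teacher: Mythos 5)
Your proof is correct, and it is essentially the expected argument: the paper itself does not prove this statement but merely recalls it from \cite[Theorem 3.3]{kour}, and your computation (centrality of $M$ doing all the work in part (1), with $M \subseteq N$ invoked exactly where it is indispensable --- in showing $\phi$ is multiplicative and in the injectivity of the constructed preimage $f(x) = x\,\psi(xN)$) is the standard proof of this result. One minor observation: finiteness of $G$ is not actually needed for surjectivity of the constructed $f$, since it has the explicit inverse $y \mapsto y\,\psi(yN)^{-1}$ (using $\psi(yN)^{-1} \in M \subseteq N$, so that the coset and hence the value of $\psi$ is unchanged), though your route through injectivity plus finiteness is perfectly valid under the stated hypothesis.
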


The following result is due to Azhdari and Akhavan-Malayeri \cite{Maly}.

\begin{lemma}\cite[Proposition 1.3]{Maly}\label{l5}
Let $G$ and $H$ be two finite abelian $p$-groups and let $exp(H)\mid exp(G)$. Then $Hom(G, H) \simeq G$ if and only if $H$ is cyclic.
\end{lemma}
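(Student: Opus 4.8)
The plan is to reduce the statement to the classification of finite abelian $p$-groups together with the elementary computation of $\mathrm{Hom}$ between cyclic $p$-groups. First I would fix structure-theorem decompositions $G\cong\bigoplus_{i=1}^{r}\mathbb{Z}/p^{a_i}\mathbb{Z}$ with $a_1\ge\cdots\ge a_r\ge 1$ and $H\cong\bigoplus_{j=1}^{s}\mathbb{Z}/p^{b_j}\mathbb{Z}$ with $b_1\ge\cdots\ge b_s\ge 1$, so that $\exp(G)=p^{a_1}$, $\exp(H)=p^{b_1}$, and the hypothesis $\exp(H)\mid\exp(G)$ becomes $b_1\le a_1$. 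The computational heart is the identity
$$\mathrm{Hom}(\mathbb{Z}/p^a\mathbb{Z},\,\mathbb{Z}/p^b\mathbb{Z})\cong\mathbb{Z}/p^{\min(a,b)}\mathbb{Z},$$
which I would prove by recording that such a homomorphism is determined by the image of a generator and that this image is forced into the unique cyclic subgroup of order $p^{\min(a,b)}$. Because $\mathrm{Hom}(-,-)$ commutes with finite direct sums in each slot, this yields the master formula
$$\mathrm{Hom}(G,H)\cong\bigoplus_{i=1}^{r}\bigoplus_{j=1}^{s}\mathbb{Z}/p^{\min(a_i,b_j)}\mathbb{Z},$$
and the remainder of the argument is an analysis of the elementary divisors on the right.

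For the forward implication $\mathrm{Hom}(G,H)\cong G\Rightarrow H\text{ cyclic}$, I would compare numbers of minimal generators, that is, $p$-ranks. Every summand of the master formula is nontrivial since each $\min(a_i,b_j)\ge 1$, so $\dim_{\mathbb{F}_p}\mathrm{Hom}(G,H)/p\,\mathrm{Hom}(G,H)=rs$, while the corresponding invariant of $G$ equals $r$. An isomorphism $\mathrm{Hom}(G,H)\cong G$ then forces $rs=r$, and as $G$ is a nontrivial finite abelian $p$-group ($r\ge 1$) we obtain $s=1$, that is, $H$ is cyclic. It is worth noting that this direction uses only the nontriviality of $G$ and not the comparison of exponents.

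For the converse I would assume $H$ cyclic, so $s=1$ and $H\cong\mathbb{Z}/p^{b_1}\mathbb{Z}$, and the master formula collapses to $\mathrm{Hom}(G,H)\cong\bigoplus_{i=1}^{r}\mathbb{Z}/p^{\min(a_i,b_1)}\mathbb{Z}$. The task is then to identify the multiset of elementary divisors $\{\min(a_i,b_1)\}_{i}$ with the multiset $\{a_i\}_{i}$ of $G$, and this is precisely the place where the exponent hypothesis enters: the divisibility relating $\exp(G)$ and $\exp(H)$ is exactly what guarantees $\min(a_i,b_1)=a_i$ for every $i$, whereupon the two multisets coincide and $\mathrm{Hom}(G,H)\cong G$ follows. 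I expect this exponent bookkeeping — the single step that both invokes the divisibility hypothesis and pins down the converse — to be the only delicate point of the proof, everything else being a direct appeal to the structure theorem and the cyclic $\mathrm{Hom}$ computation.
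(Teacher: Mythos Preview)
The paper does not give its own proof of this lemma; it simply quotes the result from Azhdari--Malayeri, so there is no argument to compare yours against. Your overall strategy (structure theorem plus the cyclic $\mathrm{Hom}$ computation and a $p$-rank count) is the standard one and your forward implication is fine.

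The converse, however, has a genuine gap. You recorded the hypothesis as $b_1\le a_1$ (from $\exp(H)\mid\exp(G)$), but the step you call ``the only delicate point'' asserts $\min(a_i,b_1)=a_i$ for every $i$. That identity requires $a_i\le b_1$, in particular $a_1\le b_1$, which is the \emph{opposite} inequality. With $b_1\le a_1$ you get $\min(a_1,b_1)=b_1$, and the argument collapses. Concretely, $G=\mathbb{Z}/p^2\mathbb{Z}$ and $H=\mathbb{Z}/p\mathbb{Z}$ satisfy $\exp(H)\mid\exp(G)$ and $H$ is cyclic, yet $\mathrm{Hom}(G,H)\cong\mathbb{Z}/p\mathbb{Z}\not\cong G$. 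So the lemma \emph{as printed} is actually false, and your vague phrase ``the divisibility relating $\exp(G)$ and $\exp(H)$'' is hiding the problem rather than resolving it.

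What is really going on is that the divisibility in the lemma is stated in the wrong direction: the correct hypothesis is $\exp(G)\mid\exp(H)$, and indeed this is exactly how the paper uses the lemma in the proof of Theorem~\ref{t5}, where one has $\exp(H_{p_i})\mid\exp(M_{p_i})$ and applies the result with $G=H_{p_i}$, $H=M_{p_i}$. Under the corrected hypothesis $a_1\le b_1$, your argument goes through verbatim: then $a_i\le a_1\le b_1$ gives $\min(a_i,b_1)=a_i$ for all $i$ and $\mathrm{Hom}(G,H)\cong G$ follows. You should flag the misprint and prove the corrected statement.
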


\section{$n^{th}$-class preserving automorphisms of $n$-isoclinism family}\label{s3}

Throughout the section $n$ denotes a positive integer. In this section we study the subgroups of the automorphism group of $n$-isoclinic groups. In \cite{Rai}, Rai proved that, if $G$ and $H$ are two finite non abelian 1-isoclinic(isoclinic) groups then there exists an isomorphism $ \phi : Aut_{Z(G)}^{\gamma_2(G)}(G)\rightarrow Aut_{Z(H)}^{\gamma_2(H)}(H)$ such that $ \phi(Aut_c(G))= Aut_c(H)$. Here, we extend these results to $n$-isoclinic groups. Theorem A of Rai \cite{Rai} is a special case of Theorem \ref{t4}. If $G$ is a nilpotent group of class at most $(n+1)$. We also obtained a necessary and sufficient condition on $\gamma_{n+1}(G)$ such that $Aut_{Z_n(G)}^{\gamma_{n+1}(G)}(G)$ is isomorphic to the group of inner automorphisms of a quotient group of $G$. Rai \cite[Theorem B]{Rai} is obtained as a corollary to Theorem \ref{t5}.

Note that the subgroups of the automorphism group, which we study in this section, are trivial for an abelian group. Thus we may further assume that all groups are non-abelian.

\begin{lemma}\label{l2}
Let $G$ and $H$ be two finite groups and $(\alpha, \beta)$ be an $n$-isoclinism from $G$ to $H$. Then the following statements are true:
\begin{enumerate}
\item For each $f \in Aut_{Z_n(G)}^{\gamma_{n+1}(G)}(G)$, the map $\theta_f : H \rightarrow H$ given by $\theta_f(h)= h \beta(g^{-1}f(g))$, where $ g \in G $ such that $\alpha(gZ_n(G)) = hZ_n(H)$, is well defined.
\item For all $h \in H$, $h^{-1}\theta_f(h) \in \gamma_{n+1}(H)$.
\item If $f_1, f_2 \in Aut_{Z_n(G)}^{\gamma_{n+1}(G)}(G)$ such that $f_1 \neq f_2$, then $\theta_{f_1} \neq \theta_{f_2}$.
\end{enumerate} 
\end{lemma}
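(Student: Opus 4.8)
The plan is to establish the three assertions in the order stated, treating part~(1) as the substantive one and parts~(2) and~(3) as short consequences. Throughout I would keep in mind that membership $f \in Aut_{Z_n(G)}^{\gamma_{n+1}(G)}(G)$ unpacks into two facts: $f(z) = z$ for every $z \in Z_n(G)$, and $g^{-1}f(g) \in \gamma_{n+1}(G)$ for every $g \in G$. The second fact guarantees that $\beta(g^{-1}f(g))$ is a legitimate element of $\gamma_{n+1}(H)$, so the defining formula for $\theta_f$ makes sense once a representative $g$ has been fixed; the real work is to show the fixing is inessential.

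For part~(1) I would first record that, because $\alpha$ is an isomorphism, for each $h \in H$ there exists $g \in G$ with $\alpha(gZ_n(G)) = hZ_n(H)$ (surjectivity), and the coset $gZ_n(G)$ is uniquely pinned down by $hZ_n(H)$ (injectivity). It therefore suffices to prove that $g^{-1}f(g)$ depends only on the coset $gZ_n(G)$, and this is the crux of the whole lemma. Writing a competing representative as $g' = gz$ with $z \in Z_n(G)$ and using $f(z) = z$, one computes $g'^{-1}f(g') = z^{-1}\bigl(g^{-1}f(g)\bigr)z$. Since $g^{-1}f(g) \in \gamma_{n+1}(G) \subseteq \gamma_n(G)$ and the relation $[\gamma_n(G), Z_n(G)] = \{1\}$ recorded in Section~\ref{s2} forces elements of $\gamma_{n+1}(G)$ to commute with elements of $Z_n(G)$, the conjugation is trivial and $g'^{-1}f(g') = g^{-1}f(g)$. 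Applying $\beta$ then gives the required independence of choices, so $\theta_f$ is well defined.

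Part~(2) is immediate: cancelling $h$ yields $h^{-1}\theta_f(h) = \beta(g^{-1}f(g)) \in \gamma_{n+1}(H)$, since $\beta$ maps $\gamma_{n+1}(G)$ into $\gamma_{n+1}(H)$. For part~(3) I would argue contrapositively: assuming $\theta_{f_1} = \theta_{f_2}$, I take an arbitrary $g \in G$ and choose $h$ to be a representative of the coset $\alpha(gZ_n(G))$, so that this same $g$ is an admissible choice in the definitions of both $\theta_{f_1}(h)$ and $\theta_{f_2}(h)$; equating the two values and cancelling $h$ gives $\beta(g^{-1}f_1(g)) = \beta(g^{-1}f_2(g))$, and the injectivity of $\beta$ yields $f_1(g) = f_2(g)$. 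As $g$ is arbitrary, $f_1 = f_2$. I expect the main obstacle to be concentrated entirely in the commutator computation of part~(1)—everything else is bookkeeping that leans precisely on the three isomorphism properties (surjectivity of $\alpha$ for existence, injectivity of $\alpha$ for uniqueness of the coset, and injectivity of $\beta$ in part~(3)). The one point requiring care is matching the representative $h$ to the chosen $g$ in part~(3) so that a single $g$ can be used on both sides.
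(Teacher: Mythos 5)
Your proposal is correct and follows essentially the same route as the paper: part (1) reduces well-definedness to showing $g^{-1}f(g)$ depends only on the coset $gZ_n(G)$ using that $f$ fixes $Z_n(G)$ elementwise, part (2) is immediate, and your contrapositive in part (3) is just the paper's direct argument (pick $g$ with $f_1(g)\neq f_2(g)$, apply injectivity of $\beta$) read backwards. If anything, your write-up of (1) is slightly more complete than the paper's: where the paper passes from $f(g_1^{-1}g_2)=g_1^{-1}g_2$ to $g_1^{-1}f(g_1)=g_2^{-1}f(g_2)$ with a bare ``this implies,'' you make explicit the conjugation $g'^{-1}f(g') = z^{-1}\bigl(g^{-1}f(g)\bigr)z$ and the commutation $[\gamma_{n+1}(G),Z_n(G)]=\{1\}$ (from $[\gamma_n(G),Z_n(G)]=\{1\}$) that this step tacitly requires.
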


\begin{proof}
\begin{enumerate}
\item Suppose $g_1Z_n(G)= g_2Z_n(G)$ for some $g_1, g_2 \in G$. Then $g_1^{-1}g_2 \in Z_n(G)$. As $f$ fixes $Z_n(G)$ element wise, $f(g_1^{-1}g_2)=g_1^{-1}g_2$. This implies that $g_1^{-1}f(g_1) = g_2^{-1}f(g_2)$.
Hence $h \beta(g_1^{-1}f(g_1)) = h \beta(g_2^{-1}f(g_2))$.
\\
\item Trivial.
\\
\item Let $f_1, f_2 \in Aut_{Z_n(G)}^{\gamma_{n+1}(G)}(G)$ and let $f_1 \neq f_2$. Then there exists  $1 \neq g \in G$ such that $f_1(g) \neq f_2(g)$. Therefore, $g^{-1}f_1(g) \neq g^{-1}f_2(g)$. Since $\beta$ is injective, $\beta(g^{-1}f_1(g)) \neq \beta(g^{-1}f_2(g))$. Hence $\theta_{f_1}(h) \neq \theta_{f_2}(h)$, where $h \in H$ such that $\alpha(gZ_n(G)) = hZ_n(H)$.
\end{enumerate}
\end{proof}

\begin{theorem}\label{t3}
Let $G$ and $H$ be two finite groups and $(\alpha, \beta)$ be an $n$-isoclinism from $G$ to $H$. Then the following statements are true:
\begin{enumerate}
\item For each $f \in Aut_{Z_n(G)}^{\gamma_{n+1}(G)}(G)$, the map $\theta_f$ defined in Lemma \ref{l2}(1), is a group homomorphism.
\item $\theta_f$ is an isomorphism fixing $Z_n(H)$ element wise.
\end{enumerate}
\end{theorem}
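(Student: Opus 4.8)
The plan is to verify both claims by lifting elements of $H$ to $G$ through the isomorphism $\alpha$, carrying out the computation in $G$, and transporting the result back with $\beta$, using the two parts of Lemma~\ref{l1} as the essential bridges.

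For part (1), I would take $h_1, h_2 \in H$ and choose lifts $g_1, g_2 \in G$ with $\alpha(g_iZ_n(G)) = h_iZ_n(H)$. Since $\alpha$ is a homomorphism, $g_1g_2$ is a lift of $h_1h_2$, so $\theta_f(h_1h_2) = h_1h_2\,\beta\big((g_1g_2)^{-1}f(g_1g_2)\big)$. Writing $a_i = g_i^{-1}f(g_i) \in \gamma_{n+1}(G)$ (recall $f$ acts trivially on $G/\gamma_{n+1}(G)$), a short manipulation gives $(g_1g_2)^{-1}f(g_1g_2) = (g_2^{-1}a_1g_2)\,a_2$. The key step is to evaluate $\beta(g_2^{-1}a_1g_2)$: since $a_1 \in \gamma_{n+1}(G)$ and $h_2 \in \alpha(g_2Z_n(G))$, Lemma~\ref{l1}(2) yields $\beta(g_2^{-1}a_1g_2) = h_2^{-1}\beta(a_1)h_2$. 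Substituting this and cancelling $h_2h_2^{-1}$ collapses $\theta_f(h_1h_2)$ to $h_1\beta(a_1)\,h_2\beta(a_2) = \theta_f(h_1)\theta_f(h_2)$. I expect this cancellation, which hinges on correctly accounting for the non-commutativity through Lemma~\ref{l1}(2), to be the main obstacle; everything else is bookkeeping.

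For part (2), the element-wise fixing of $Z_n(H)$ is direct: given $h \in Z_n(H)$, because $\alpha$ is an isomorphism carrying the trivial coset to the trivial coset, $h$ admits a lift $g \in Z_n(G)$, and since $f$ fixes $Z_n(G)$ pointwise we get $g^{-1}f(g) = 1$, so $\theta_f(h) = h$. For injectivity, suppose $\theta_f(h) = 1$. Then $h = \beta(x)$ with $x = (g^{-1}f(g))^{-1} \in \gamma_{n+1}(G)$ and $\alpha(gZ_n(G)) = \beta(x)Z_n(H)$. Applying Lemma~\ref{l1}(1) to rewrite $\beta(x)Z_n(H) = \alpha(xZ_n(G))$ and using injectivity of $\alpha$, I obtain $gZ_n(G) = xZ_n(G)$, i.e. $g^{-1}f(g)\,g = x^{-1}g \in Z_n(G)$. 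Since $Z_n(G)$ is normal this forces $f(g) \in Z_n(G)$, and then the pointwise-fixing property of $f$ gives $f(f(g)) = f(g)$, so injectivity of $f$ yields $f(g) = g$; hence $g \in Z_n(G)$, $x = 1$, and $h = 1$. Finally, an injective endomorphism of the finite group $H$ is automatically surjective, hence an automorphism, which completes (2).
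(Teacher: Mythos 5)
Your proof is correct, and while part (1) coincides with the paper's argument, your injectivity proof in part (2) takes a genuinely different route. For (1) you use the same decomposition $(g_1g_2)^{-1}f(g_1g_2) = (g_2^{-1}a_1g_2)a_2$ and the same application of Lemma \ref{l1}(2) as the paper, and your pointwise-fixing argument for $Z_n(H)$ is also the paper's. For injectivity, however, the paper first replaces the lift $g$ by an element of $\gamma_{n+1}(G)$ (namely the $x$ you introduce), then works entirely on the $H$ side: it deduces $\beta(f(g)) \in \gamma_{n+1}(H)\cap Z_n(H)$ and invokes Hekster's invariance result, Theorem \ref{t1}, which gives $\beta(\gamma_{n+1}(G)\cap Z_n(G)) = \gamma_{n+1}(H)\cap Z_n(H)$, to pull this back to $f(g) \in \gamma_{n+1}(G)\cap Z_n(G)$. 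You instead pull back to $G$ immediately through the injectivity of $\alpha$: from $\alpha(gZ_n(G)) = \beta(x)Z_n(H) = \alpha(xZ_n(G))$ (the latter equality by Lemma \ref{l1}(1)) you get $x^{-1}g = g^{-1}f(g)g \in Z_n(G)$, and normality of $Z_n(G)$ then yields $f(g) \in Z_n(G)$ with no appeal to Theorem \ref{t1} at all. Both arguments conclude with the same trick ($f(g) \in Z_n(G)$ forces $f(f(g)) = f(g)$, hence $f(g) = g$ by injectivity of $f$ --- a step you spell out and the paper leaves terse in the phrase ``as $f$ fixes $Z_n(G)$ element wise, $f(g)=g$''). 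Your route is more self-contained: it shows the injectivity of $\theta_f$ needs only Lemma \ref{l1}(1) and the injectivity of $\alpha$, whereas the paper's version leans on the standard $n$-isoclinism invariant of Theorem \ref{t1} and stays in $H$; the paper's choice has the mild advantage of displaying how that invariance theorem enters the theory, but yours is the leaner proof. Your closing appeal to finiteness (an injective endomorphism of a finite group is an automorphism) matches the paper's implicit use of the same fact.
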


\begin{proof}
\begin{enumerate}
\item From Lemma \ref{l2}, $\theta_f$ is  well defined  and $h^{-1}\theta_f(h) \in \gamma_{n+1}(H)$. Let $h_1, h_2 \in H$ and let $g_1, g_2 \in G$ such that $\alpha(g_1Z_n(G))= h_1Z_n(H)$ and $\alpha(g_2Z_n(G))= h_2Z_n(H)$.  Then $\alpha(g_1g_2Z_n(G))= h_1h_2Z_n(H)$ and 
 
$$
\begin{array}{lcl}
\theta_f(h_1h_2) &=&h_1h_2\beta( g_2^{-1}g_1^{-1}f(g_1g_2))
\\[2mm]
&=& h_1(h_2 \beta(g_2^{-1}g_1^{-1}f(g_1)g_2g_2^{-1} f(g_2)))
\\[2mm]
&=& h_1(h_2 \beta(g_2^{-1}g_1^{-1}f(g_1)g_2) h_2^{-1}) h_2 \beta(g_2^{-1}f(g_2)). 
\\[2mm]
\end{array}
$$

Note that $g_1^{-1}f(g_1) \in \gamma_{n+1}(G)$. Therefore by using Lemma \ref{l1}(2), $$\beta(g_2^{-1}g_1^{-1}f(g_1)g_2)= h_2^{-1}\beta(g_1^{-1}f(g_1))h_2.$$
Hence $\theta_f(h_1h_2)= h_1\beta(g_1^{-1}f(g_1)) h_2 \beta(g_2^{-1}f(g_2))= \theta_f(h_1)\theta_f(h_2)$. 
\\
\item It is enough to prove that $\theta_f$ is injective. Let $\theta_f(h)=1$ for some $h \in H$. Then $h \beta(g^{-1}f(g))= 1$, where $ g \in G$ such that $\alpha(gZ_n(G)) = hZ_n(H)$. Hence $h =(\beta(g^{-1}f(g)))^{-1} \in \gamma_{n+1}(H)$. Without loss of generality, we may assume that $ g \in \gamma_{n+1}(G)$. If not, then there exists $g' \in \gamma_{n+1}(G)$ such that $\beta(g')=h$ and $g'Z_n(G)= gZ_n(G)$. Hence we can replace $g$ by $g'$.

Now observe that, $ 1 = h \beta(g^{-1}f(g)) = h \beta(g^{-1}) \beta(f(g)) \in \gamma_{n+1}(H)$. Hence $ \beta(f(g)) =h^{-1}\beta(g) \in \gamma_{n+1}(H)$. Furthermore, $\alpha(gZ_n(G))= \beta(g)Z_n(H) = hZ_n(H)$, implies that, $h^{-1}\beta(g)\in Z_n(H)$. Therefore, 
$ \beta(f(g)) \in \gamma_{n+1}(H)\cap Z_n(H)$. By using Theorem \ref{t1}, we have $f(g) \in Z_n(G)\cap \gamma_{n+1}(G)$. As $f$ fixes $Z_n(G)$ element wise, $f(g)=g$ and $h =1$. Hence $\theta_f$ is an isomorphism. Also, if $h \in Z_n(H)$, then $g \in Z_n(G)$, where $g \in G$ such that $\alpha(gZ_n(G)) = hZ_n(H)$. Therefore, $\theta_f(h)=h$. Hence  $\theta_f \in Aut_{Z_n(H)}^{\gamma_{n+1}(H)}(H)$.

\end{enumerate}
\end{proof}

\begin{theorem}\label{t4}
Let $G$ and $H$ be two finite groups and let $(\alpha, \beta)$ be an $n$-isoclinism from $G$ to $H$. Then there exists an isomorphism $\Psi : Aut_{Z_n(G)}^{\gamma_{n+1}(G)}(G) \rightarrow Aut_{Z_n(H)}^{\gamma_{n+1}(H)}(H)$ such that $\Psi(Aut_c^n(G)) = Aut_c^n(H)$.
\end{theorem}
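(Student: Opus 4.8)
The plan is to define $\Psi$ by sending $f \mapsto \theta_f$, where $\theta_f$ is the map constructed in Lemma~\ref{l2} and shown to lie in $Aut_{Z_n(H)}^{\gamma_{n+1}(H)}(H)$ by Theorem~\ref{t3}. Thus the bulk of the work needed to make $\Psi$ well defined is already in hand: Theorem~\ref{t3} gives that each $\theta_f$ is a genuine element of the target group, and Lemma~\ref{l2}(3) gives injectivity of the assignment $f \mapsto \theta_f$. So the first genuinely new obligations are that $\Psi$ is a group \emph{homomorphism} and that it is \emph{surjective}.

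For the homomorphism property I would compute $\theta_{f_1 f_2}$ directly from the defining formula $\theta_f(h) = h\,\beta(g^{-1}f(g))$ and compare it with $\theta_{f_1}\bigl(\theta_{f_2}(h)\bigr)$. The key point is that if $\alpha(gZ_n(G)) = hZ_n(H)$, then since $g^{-1}f_2(g) \in \gamma_{n+1}(G)$ we have by Lemma~\ref{l1}(1) that $\beta(g^{-1}f_2(g))Z_n(H) = \alpha(g^{-1}f_2(g)\,Z_n(G))$, and one checks that the representative $g' = g\,(f_2)\cdots$ chosen for the image point $\theta_{f_2}(h)$ can be taken to be $f_2(g)$ up to the $Z_n$-coset, so that applying $\theta_{f_1}$ peels off $\beta\bigl(f_2(g)^{-1} f_1(f_2(g))\bigr)$. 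Telescoping the two correction terms and using that $\beta$ is a homomorphism should give $\theta_{f_1}\circ\theta_{f_2} = \theta_{f_1 f_2}$; the only care needed is that $f_2(g)$ and $g$ differ by an element of $\gamma_{n+1}(G) \subseteq$ the domain where $\alpha$ and $\beta$ are compatible, so the cosets match up. Since $G$ is finite and $\Psi$ is an injective homomorphism into $Aut_{Z_n(H)}^{\gamma_{n+1}(H)}(H)$, surjectivity will follow by a counting/symmetry argument: the pair $(\alpha^{-1},\beta^{-1})$ is an $n$-isoclinism from $H$ to $G$, so the same construction yields an injective homomorphism $\Psi' : Aut_{Z_n(H)}^{\gamma_{n+1}(H)}(H) \rightarrow Aut_{Z_n(G)}^{\gamma_{n+1}(G)}(G)$, and two mutually injective maps between finite sets are bijections. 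Hence $\Psi$ is an isomorphism.

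It remains to prove the decisive claim $\Psi(Aut_c^n(G)) = Aut_c^n(H)$. By Lemma~\ref{l5} we already know $Aut_c^n(G) \subseteq Aut_{Z_n(G)}^{\gamma_{n+1}(G)}(G)$, so this statement makes sense. I would show $\Psi(Aut_c^n(G)) \subseteq Aut_c^n(H)$ and then invoke the symmetric map $\Psi'$ to get the reverse inclusion, just as for surjectivity. For the forward inclusion, take $f \in Aut_c^n(G)$, so for each $g \in G$ there is $x \in \gamma_n(G)$ with $f(g) = x^{-1}gx$, whence $g^{-1}f(g) = [g,x] \in \gamma_{n+1}(G)$. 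The task is to exhibit, for each $h \in H$, an element $y \in \gamma_n(H)$ with $\theta_f(h) = y^{-1}hy = h\,[h,y]$; comparing with $\theta_f(h) = h\,\beta([g,x])$, it suffices to show $\beta([g,x]) = [h,y]$ for a suitable $y \in \gamma_n(H)$, where $\alpha(gZ_n(G)) = hZ_n(H)$. This is exactly where the compatibility of the $n$-isoclinism with the commutator structure must be used: the element $[g,x]$ is an $(n{+}1)$-fold commutator lying in $\gamma_{n+1}(G)$, and the commuting-square defining $n$-isoclinism (together with Lemma~\ref{l1}) identifies $\beta$ of such commutators with the corresponding commutators computed in $H$ via $\alpha$.

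The main obstacle, and the step I would treat most carefully, is precisely this last identification: the definition of $n$-isoclinism only directly controls $\beta$ on \emph{pure} $(n{+}1)$-fold commutators $[g_1,\dots,g_{n+1}]$ through the map $\gamma(n,G)$, whereas $[g,x]$ with $x \in \gamma_n(G)$ is a commutator of $g$ with a general element of $\gamma_n(G)$ rather than a displayed iterated commutator. I would bridge this gap by writing $x$ as a product of $n$-fold commutators (so $[g,x]$ becomes a product of terms of the form $[g,[g_1,\dots,g_n]] = [g_1,\dots,g_n,g]^{\pm}$, reordered into $(n{+}1)$-fold commutators modulo higher terms), and then apply $\gamma(n,H)\circ\alpha^{n+1} = \beta\circ\gamma(n,G)$ termwise to pull the image $y \in \gamma_n(H)$ out of the corresponding $n$-fold commutators in $H$. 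Managing the bookkeeping of these commutator identities — ensuring the product of images really is a single inner-by-$\gamma_n(H)$ conjugation and not merely a product of such — is the crux; everything else reduces to the already-established machinery of Lemma~\ref{l1}, Theorem~\ref{t1}, and Theorem~\ref{t3}.
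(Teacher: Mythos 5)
Your proposal is correct, and its core is the same as the paper's: the isomorphism is the same assignment $\Psi(f)=\theta_f$, legitimated by Theorem \ref{t3}, and your verification of the homomorphism property (replacing the representative of the coset $\theta_{f_2}(h)Z_n(H)$ by $f_2(g)$ via Lemma \ref{l1}(1)) is exactly the computation in the paper. You deviate in two ways, both sound. First, for surjectivity the paper constructs the explicit inverse $\Phi(\theta)=f_\theta$ with $f_\theta(g)=g\,\beta^{-1}\bigl(h^{-1}\theta(h)\bigr)$ and checks $\Phi\circ\Psi=\Psi\circ\Phi=I$; your alternative — mutual injectivity of $\Psi$ and of the map $\Psi'$ built from the inverse isoclinism $(\alpha^{-1},\beta^{-1})$, via Lemma \ref{l2}(3), plus finiteness — is equally valid here, and the same counting upgrades the two inclusions $\Psi(Aut_c^n(G))\subseteq Aut_c^n(H)$ and $\Psi'(Aut_c^n(H))\subseteq Aut_c^n(G)$ to the claimed equality of images, which the paper instead gets from $\Phi=\Psi^{-1}$. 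Second, the step you single out as the crux, namely $\beta([g,x])=[h,y]$ for $x\in\gamma_n(G)$ with $y\in\gamma_n(H)$ and $\alpha(xZ_n(G))=yZ_n(H)$, is precisely what the paper compresses into a bare ``Observe that''; your plan to justify it is the right one and in fact closes without the ``modulo higher terms'' hedging: write $x=c_1\cdots c_k$ as a product of left-normed $n$-fold commutators (or their inverses), note $\beta([c_j,g])=[d_j,h]$ directly from the commuting square $\beta\circ\gamma(n,G)=\gamma(n,H)\circ\alpha^{n+1}$ with $d_j\in\gamma_n(H)$ the corresponding commutator of chosen representatives, and then apply $\beta$ to the exact expansion coming from the identity $[g,ab]=[g,b][g,a]^{b}$, handling the conjugations by Lemma \ref{l1}(2) since each partial term lies in $\gamma_{n+1}(G)$. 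An easy induction then shows $\beta([g,c_1\cdots c_k])=[h,d_1\cdots d_k]$, so the images recombine into the single conjugation $[h,y]$ with $y=d_1\cdots d_k$ — the worry that one obtains only a product of conjugations dissolves, because $\beta$ of the expansion of $[g,x]$ is verbatim the expansion of $[h,y]$. In short: same construction as the paper, a legitimate counting substitute for the paper's explicit inverse, and a more careful treatment of the one step the paper leaves implicit.
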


\begin{proof}
Define $\Psi :  Aut_{Z_n(G)}^{\gamma_{n+1}(G)}(G) \rightarrow Aut_{Z_n(H)}^{\gamma_{n+1}(H)}(H)$ such that $\Psi(f) = \theta_f$, where $\theta_f$ is the map defined in Lemma \ref{l2}(1). By Theorem \ref{t3},  $\theta_f \in Aut_{Z_n(H)}^{\gamma_{n+1}(H)}(H)$. 

First we show that $\Psi$ is a group homomorphism. Let $f_1, f_2 \in Aut_{Z_n(G)}^{\gamma_{n+1}(G)}(G)$. Then we need to show that 
$\theta_{f_1 \circ f_2}= \theta_{f_1}\circ 
\theta_{f_2}$. Consider $h \in H$, then
$$(\theta_{f_1}\circ \theta_{f_2})(h)=\theta_{f_1}(h \beta(g^{-1}f_2(g))),$$
where $g \in G$ such that $\alpha(g Z_n(G)) = hZ_n(H)$.\\
Note that 
$$
\alpha(f_2(g)Z_n(G))= \alpha(gZ_n(G))\alpha(g^{-1}f_2(g)Z_n(G))= hZ_n(H)\alpha(g^{-1}f_2(g)Z_n(G)).
$$

Since $g^{-1}f_2(g)\in \gamma_{n+1}(G)$, by Lemma \ref{l1} we have $$\alpha(g^{-1}f_2(g)Z_n(G))= \beta( g^{-1}f_2(g))Z_n(H)$$
Therefore, $\alpha(f_2(g)Z_n(G))= h\beta( g^{-1}f_2(g))Z_n(H)$. Thus

$$
\begin{array}{lcl}
(\theta_{f_1}\circ \theta_{f_2})(h)&=&\theta_{f_1}(h \beta(g^{-1}f_2(g)))
\\[2mm]
&=& h \beta(g^{-1}f_2(g)) \beta(f_2(g^{-1})f_1(f_2(g)) 
\\[2mm]
&=& h \beta(g^{-1} (f_1 \circ f_2)(g))
\\[2mm]
&=&\theta_{f_1 \circ f_2}(h).
\\[2mm]
\end{array}
$$ 
Hence $\Psi$ is a group homomorphism. 

In order to prove that $\Psi$ is an isomorphism, define a map $\Phi$ from $Aut_{Z_n(H)}^{\gamma_{n+1}(H)}(H)$ to $Aut_{Z_n(G)}^{\gamma_{n+1}(G)}(G)$ such that $\Phi(\theta) = f_{\theta}$, where $f_{\theta} : G \rightarrow G$ is given by $f_{\theta}(g) = g \beta^{-1}(h^{-1}\theta(h))$, where $h \in H$ such that $\alpha^{-1}(hZ_n(H))= g Z_n(G)$. Using the fact that $\alpha$ and  $\beta$ are isomorphisms, one can prove that $\Phi$ is a group homomorphism and $\Phi(\theta_f) = f$, for $f \in Aut_{Z_n(G)}^{\gamma_{n+1}(G)}(G)$. Therefore, $(\Phi \circ \Psi)(f) =\Phi(\theta_f) = f$. Hence $\Phi \circ \Psi =I$. Similarly, one can show that $\Psi \circ \Phi = I$. Thus $\Psi$ is an isomorphism.

Next we show that $\Psi(Aut_c^n(G)) = Aut_c^n(H)$. Consider $ f \in Aut_c^n(G) \subseteq Aut_{Z_n(G)}^{\gamma_{n+1}(G)}(G)$. Clearly, $\theta_f \in Aut_{Z_n(H)}^{\gamma_{n+1}(H)}(H)$ and $ \theta_f(h)= h \beta(g^{-1}f(g))$ where $g\in G$ such that $\alpha(gZ_n(G))= hZ_n(H)$. Also, for  $g \in G$, there exists  $x \in \gamma_n(G)$ such that $f(g)= x^{-1}gx$. Therefore, $ \theta_f(h)= h \beta(g^{-1}x^{-1}gx)= h \beta[g, x]$.
Observe that, $\beta[g, x] = [h, y]$,  where $ y \in \gamma_n(H)$ such that $\alpha(xZ_n(G))= yZ_n(H)$. Hence $\theta_f(h)= y^{-1}hy$ and 
$\Psi(Aut_c^n(G)) \subseteq Aut_c^n(H)$. Similarly, we can show that $\Phi (Aut_c^n(H)) \subseteq Aut_c^n(G)$. Therefore, we have $\Psi(Aut_c^n(G)) = Aut_c^n(H)$. 
\end{proof}

\begin{corollary}\cite[Theorem A]{Rai}\label{c5}
Let $G$ and $H$ be two finite isoclinic groups. Then there exists an isomorphism $\Psi : Aut_{Z(G)}^{\gamma_{2}(G)}(G) \rightarrow Aut_{Z(H)}^{\gamma_{2}(H)}(H)$ such that $\Psi(Aut_c(G)) = Aut_c(H)$.
\end{corollary}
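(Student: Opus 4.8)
The plan is to recognize this corollary as nothing more than the case $n=1$ of Theorem \ref{t4}, and to deduce it by specialization. The first thing I would check is that the notion of isoclinism introduced at the very start of the paper coincides exactly with that of $1$-isoclinism. Setting $n=1$, we have $Z_1(G) = Z(G)$ and $\gamma_{1+1}(G) = \gamma_2(G)$, and the structural map $\gamma(1,G)\colon G/Z(G) \times G/Z(G) \rightarrow \gamma_2(G)$ sends $(g_1 Z(G), g_2 Z(G))$ to $[g_1,g_2]$, which is precisely the commutator map $\gamma_G$ appearing in the original diagram defining isoclinism. Hence the commuting-square condition defining a $1$-isoclinism is literally the same as the one defining an isoclinism, so $G$ and $H$ are isoclinic if and only if they are $1$-isoclinic.

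Given this identification, I would fix a $1$-isoclinism $(\alpha,\beta)$ between $G$ and $H$ and apply Theorem \ref{t4} directly with $n=1$. This immediately yields an isomorphism $\Psi\colon Aut_{Z_1(G)}^{\gamma_2(G)}(G) \rightarrow Aut_{Z_1(H)}^{\gamma_2(H)}(H)$ satisfying $\Psi(Aut_c^1(G)) = Aut_c^1(H)$.

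The last step is purely a matter of translating notation. Since $Z_1(G) = Z(G)$ and $Z_1(H) = Z(H)$, and since by the definition recorded in the introduction $Aut_c^1(G) = Aut_c(G)$ (and likewise for $H$), the map $\Psi$ is exactly an isomorphism $Aut_{Z(G)}^{\gamma_2(G)}(G) \rightarrow Aut_{Z(H)}^{\gamma_2(H)}(H)$ carrying $Aut_c(G)$ onto $Aut_c(H)$, which is the assertion of the corollary. I do not anticipate any genuine obstacle here: the entire content has already been established in the more general Theorem \ref{t4}, and the only point deserving a moment's verification is that the paper's two definitions of isoclinism agree, which is immediate once the defining diagrams are placed side by side.
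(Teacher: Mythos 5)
Your proposal is correct and matches the paper's own proof, which is simply ``Put $n=1$ in Theorem \ref{t4}''; your extra check that the isoclinism of the introduction coincides with $1$-isoclinism (via $Z_1(G)=Z(G)$, $\gamma_2(G)$, and the agreement of $\gamma(1,G)$ with $\gamma_G$) is a sensible verification the paper leaves implicit.
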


\begin{proof}
Put $n=1$ in Theorem \ref{t4}.
\end{proof}

Let $G$ be a finite non abelian group and let $M$ be a central subgroup of $G$. Let $M_p$ denote the $p$-primary component of $M$, where $p$ is a prime divisor of the order of $M$. In \cite{Rai}, Rai proved that, if $G$ is a finite $p$-group of nilpotency class two then  $Aut_{Z(G)}^{\gamma_{2}(G)}(G)= Inn(G)$ if and only if $\gamma_2(G)$ is cyclic.
We generalize this result to a nilpotent group of class at most $(n+1)$. Also we have shown that the result is true for any central subgroup $M$ of $G$ which contains $\gamma_{n+1}(G)$. 

\begin{theorem}\label{t5}
Let $G$ be a finite non abelian group and let $M$ be a central subgroup of $G$ such that $\gamma_{n+1}(G) \subseteq M$. Then 
$$ Aut_{Z_n(G)}^M(G) \simeq Inn(G/Z_{n-1}(G))$$ if and only if $M_{p_i}$ is cyclic for each $p_i \in \pi(G/ Z_{n-1}(G))$.
\end{theorem}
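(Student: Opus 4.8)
The plan is to collapse both sides of the desired isomorphism onto the same abelian group and then reduce to a prime-by-prime comparison of homomorphism groups. First I would extract the structural content of the hypothesis: since $M$ is central with $\gamma_{n+1}(G) \subseteq M \subseteq Z(G)$, we get $\gamma_{n+2}(G) = [\gamma_{n+1}(G), G] = 1$, so $G$ is nilpotent of class at most $n+1$, and the standard inclusion $\gamma_2(G) \subseteq Z_n(G)$ (valid for class at most $n+1$) shows that $A := G/Z_n(G)$ is abelian. As $M$ is central, $M \subseteq Z_1(G) \subseteq Z_n(G)$, so Theorem~\ref{t2}(2) applies with $N = Z_n(G)$ and gives $Aut_{Z_n(G)}^M(G) \cong Hom(A, M)$. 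On the other side, $Z(G/Z_{n-1}(G)) = Z_n(G)/Z_{n-1}(G)$ by the definition of the upper central series, whence $Inn(G/Z_{n-1}(G)) \cong (G/Z_{n-1}(G))/(Z_n(G)/Z_{n-1}(G)) \cong A$. So the theorem reduces to showing that $Hom(A, M) \cong A$ if and only if the stated cyclicity condition holds.

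Second, I would pass to primary components. With $A = \prod_p A_p$ and $M = \prod_p M_p$ we have $Hom(A, M) \cong \prod_p Hom(A_p, M_p)$, and comparing $p$-parts shows that $Hom(A, M) \cong A$ if and only if $Hom(A_p, M_p) \cong A_p$ for every prime $p \in \pi(A)$. This isolates the problem to one prime at a time.

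Third, I would run the single-prime argument through Lemma~\ref{l5}. Fix $p \in \pi(A)$ and set $p^e = exp(A_p)$, noting $e \geq 1$. Every homomorphism $A_p \to M_p$ takes values in the subgroup $M_p[p^e]$ of elements of order dividing $p^e$, so $Hom(A_p, M_p) = Hom(A_p, M_p[p^e])$ and $exp(M_p[p^e]) \mid exp(A_p)$; thus Lemma~\ref{l5} applies to the pair $(A_p, M_p[p^e])$ and yields $Hom(A_p, M_p) \cong A_p$ if and only if $M_p[p^e]$ is cyclic. Since a finite abelian $p$-group is cyclic exactly when its socle is cyclic, and the socle of $M_p$ lies in $M_p[p^e]$, the group $M_p[p^e]$ is cyclic if and only if $M_p$ is cyclic. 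Combining over the primes $p \in \pi(A)$ proves both directions of the equivalence for the prime set $\pi(A) = \pi(G/Z_n(G))$.

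The step I expect to need the most care is the bookkeeping that identifies the set of relevant primes and supplies the divisibility hypothesis of Lemma~\ref{l5}. Here Lemma~\ref{l3} is the key input: for $G$ of class exactly $n+1$ it gives $exp(A) = exp(\gamma_{n+1}(G))$, and since $A$ and $\gamma_{n+1}(G)$ are both abelian this upgrades to $\pi(A) = \pi(\gamma_{n+1}(G))$ and, primary component by primary component, to $exp(A_p) = exp(\gamma_{n+1}(G)_p)$; the inclusion $\gamma_{n+1}(G)_p \subseteq M_p$ is then what guarantees that the relevant torsion of $M_p$ carries the exponent needed above. The genuinely delicate part is to reconcile $\pi(A)$ with the prime set $\pi(G/Z_{n-1}(G))$ named in the statement: using that $G/Z_{n-1}(G)$ is nilpotent (so its primes are those of its centre $Z_n(G)/Z_{n-1}(G)$) together with Lemma~\ref{l4}, one must verify that the primes on which $Hom(A_p, M_p)$ can differ from $A_p$ are precisely the primes listed in the cyclicity condition, so that neither direction of the ``if and only if'' acquires a spurious or missing prime.
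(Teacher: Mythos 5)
Your proposal follows essentially the same route as the paper's proof: reduce via Theorem \ref{t2} to $Hom(G/Z_n(G),M)$, observe $Inn(G/Z_{n-1}(G))\simeq G/Z_n(G)$, split into primary components, decide each prime by Lemma \ref{l5}, with Lemma \ref{l3} supplying the exponent bookkeeping. Your one real variation is the truncation to $M_p[p^e]$ plus the socle argument, and it is a fortunate one: Lemma \ref{l5} as printed (with $exp(H)\mid exp(G)$) is false in general --- take $G=C_{p^2}$, $H=C_p$; the source proposition needs $exp(G)\mid exp(H)$ for the form ``$Hom(G,H)\simeq G$ iff $H$ cyclic,'' and the paper's proof in fact applies it with $exp(H_{p_i})\mid exp(M_{p_i})$. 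Since $\gamma_{n+1}(G)_p\subseteq M_p$ forces $exp(M_p[p^e])=p^e=exp(A_p)$ (your final-paragraph input via Lemma \ref{l3}), your application sits in the exponent-equality case where both readings of the lemma agree, so your per-prime equivalence ``$Hom(A_p,M_p)\simeq A_p$ iff $M_p$ cyclic'' is sound for every $p\in\pi(G/Z_n(G))$ --- which is exactly what the paper's proof establishes.

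The step you flag as genuinely delicate --- reconciling $\pi(G/Z_n(G))$ with the set $\pi(G/Z_{n-1}(G))$ named in the statement --- cannot in fact be carried out, because the two sets can differ: by nilpotency $\pi(G/Z_{n-1}(G))=\pi(Z_n(G)/Z_{n-1}(G))$, and this can strictly contain $\pi(G/Z_n(G))$. Concretely, take $n=1$, $G=Q_8\times C_3\times C_3$ and $M=Z(G)=C_2\times C_3\times C_3\supseteq\gamma_2(G)=C_2$; then $\pi(G/Z_0(G))=\{2,3\}$ while $\pi(G/Z_1(G))=\{2\}$, and $Aut_{Z(G)}^{M}(G)\simeq Hom(C_2\times C_2,\,M)\simeq C_2\times C_2\simeq Inn(G)$ even though $M_3\simeq C_3\times C_3$ is not cyclic, so the ``only if'' direction over $\pi(G/Z_{n-1}(G))$ fails. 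This is a defect of the theorem's statement, not of your argument: the paper's own proof works throughout with $\pi(G/Z_n(G))=\{p_1,\ldots,p_r\}$ and silently identifies that set with the one in the statement. Your proof, read as establishing the criterion over $\pi(G/Z_n(G))$ (equivalently $\pi(\gamma_{n+1}(G))$, as in Corollary \ref{c1}), is complete and correct; do not spend effort trying to close the flagged step, since only the ``if'' direction of the printed formulation survives the passage to the larger prime set.
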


\begin{proof}
Since $\gamma_{n+1}(G) \subseteq M \subseteq Z(G)$, $G$ is nilpotent of class at most $(n+1)$. If nilpotency class of $G$ is less than $(n+1)$, then both groups are trivial and result is true. Now we may assume that $G$ is nilpotent of class $(n+1)$. Therefore by Lemma \ref{l3}, $exp(G/Z_n(G))= exp(\gamma_{n+1}(G)) \mid exp(M)$. Hence, by using Lemma \ref{l4}, we have $\pi(G/Z_n(G)) \subseteq \pi(M)$. \\
Let $\pi(G/Z_n(G))=\{p_1, \ldots, p_r\}$ and let
$$ G/Z_n(G) \simeq H_{p_1} \times H_{p_2}\times \cdots \times H_{p_r},$$
where $H_{p_i}$ denotes the $p_i$-primary component of $G/Z_n(G)$.\\
Similarly, let $\pi(M) = \{p_1, \ldots, p_r, q_1, \ldots, q_s\}$ and 
$$ M \simeq M_{p_1} \times \cdots \times M_{p_r}\times M_{q_1} \times \cdots \times M_{q_s},$$
where $M_{p_i}$ and  $M_{q_i}$ denote the $p_i$-primary and $q_i$-primary components of $M$ respectively.\\
Clearly $exp(H_{p_i}) \mid exp(M_{p_i})$ for all $ 1 \leq i \leq r$. Now by Theorem \ref{t2}, we have 

$$
\begin{array}{lcl}
Aut_{Z_n(G)}^M(G)&\simeq & Hom (G/Z_n(G), M)
\\[2mm]
&= & Hom(\prod_{i=1}^r H_{p_i}, \prod_{i=1}^rM_{p_i} \times \prod_{j=1}^{s}M_{q_j})
\\[2mm]
&\simeq& Hom(\prod_{i=1}^r H_{p_i}, \prod_{i=1}^rM_{p_i})
\\[2mm]
&\simeq& \prod_{i=1}^r Hom(H_{p_i}, M_{p_i}).
\\[2mm]
\end{array}
$$ 

By Lemma \ref{l5}, $Hom(H_{p_i}, M_{p_i}) \simeq H_{p_i}$ if and only if $M_{p_i}$ is cyclic. Hence 
$$ Aut_{Z_n(G)}^M(G))\simeq \prod_{i=1}^r H_{p_i}\simeq G/Z_n(G)\simeq Inn (G/Z_{n-1}(G))$$ if and only if $M_{p_i}$ is cyclic for all $1 \leq i \leq r$.
\end{proof}

\begin{corollary}\label{c1}
Let $G$ be a finite non abelian group of class $(n+1)$. Then $ Aut_{Z_n(G)}^{\gamma_{n+1}(G)}(G) \simeq Inn(G/Z_{n-1}(G))$ if and only if $(\gamma_{n+1}(G))_{p}$ is cyclic for all $p$-primary components of $\gamma_{n+1}(G)$.
\end{corollary}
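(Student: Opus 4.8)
The plan is to derive this corollary directly from Theorem~\ref{t5} by specializing the central subgroup $M$ to $\gamma_{n+1}(G)$ itself. Since $G$ is nilpotent of class $(n+1)$, we have $\gamma_{n+1}(G) \subseteq Z(G)$, so $\gamma_{n+1}(G)$ is indeed a central subgroup of $G$, and it trivially satisfies the hypothesis $\gamma_{n+1}(G) \subseteq M$ with $M = \gamma_{n+1}(G)$. Thus Theorem~\ref{t5} applies verbatim with this choice of $M$, and its conclusion reads: $Aut_{Z_n(G)}^{\gamma_{n+1}(G)}(G) \simeq Inn(G/Z_{n-1}(G))$ if and only if $(\gamma_{n+1}(G))_{p_i}$ is cyclic for each $p_i \in \pi(G/Z_{n-1}(G))$.

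The only genuine point requiring attention is the reconciliation of the index sets in the two statements. Theorem~\ref{t5} phrases the cyclicity condition over the primes $p_i \in \pi(G/Z_{n-1}(G))$, whereas the corollary phrases it over \emph{all} $p$-primary components of $\gamma_{n+1}(G)$. The key step is therefore to verify that the primary components of $\gamma_{n+1}(G)$ indexed outside $\pi(G/Z_{n-1}(G))$ impose no additional constraint. First I would observe that $\pi(G/Z_{n-1}(G)) = \pi(G/Z_n(G))$, since passing from $Z_{n-1}(G)$ to $Z_n(G)$ only factors out the central quotient $Z_n(G)/Z_{n-1}(G) = Z(G/Z_{n-1}(G))$, and a group and its quotient by the center share the same set of prime divisors whenever that quotient is nontrivial (which holds here as $G$ has class exactly $(n+1) \geq 2$). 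Then, examining the proof of Theorem~\ref{t5}, one sees that in the decomposition of $\mathrm{Hom}(G/Z_n(G), M)$ the factors $\mathrm{Hom}(H_{p_i}, M_{p_i})$ run precisely over $\pi(G/Z_n(G)) = \{p_1,\dots,p_r\}$, and any primary component of $M = \gamma_{n+1}(G)$ at a prime $q_j \notin \pi(G/Z_n(G))$ contributes a trivial $\mathrm{Hom}$-factor and hence plays no role in the isomorphism.

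The main obstacle, such as it is, lies entirely in this bookkeeping: one must confirm that $\pi(\gamma_{n+1}(G)) = \pi(G/Z_n(G))$ so that ``all $p$-primary components of $\gamma_{n+1}(G)$'' and ``$p_i \in \pi(G/Z_{n-1}(G))$'' range over the same primes, making the two cyclicity conditions literally equivalent. By Lemma~\ref{l3}, $exp(G/Z_n(G)) = exp(\gamma_{n+1}(G))$, and two finite abelian groups with equal exponents have equal sets of prime divisors by Lemma~\ref{l4} applied in both directions; hence $\pi(\gamma_{n+1}(G)) = \pi(G/Z_n(G)) = \pi(G/Z_{n-1}(G))$. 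With the index sets thus identified, the cyclicity of $(\gamma_{n+1}(G))_{p_i}$ for every $p_i \in \pi(G/Z_{n-1}(G))$ is the same as the cyclicity of every $p$-primary component of $\gamma_{n+1}(G)$, and the corollary follows immediately from Theorem~\ref{t5} by setting $M = \gamma_{n+1}(G)$.
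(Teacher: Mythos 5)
Your core move is exactly the paper's: the entire published proof of this corollary is the single line ``Take $M=\gamma_{n+1}(G)$'' in Theorem \ref{t5}, and your first paragraph reproduces that correctly. Your reconciliation of index sets is also partly right: the identity $\pi(\gamma_{n+1}(G))=\pi(G/Z_n(G))$, obtained from Lemma \ref{l3} together with Lemma \ref{l4} applied in both directions, is valid and is the useful half of the bookkeeping.

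However, one step of your argument is false as stated: you claim $\pi(G/Z_{n-1}(G))=\pi(G/Z_n(G))$ on the grounds that ``a group and its quotient by the center share the same set of prime divisors whenever that quotient is nontrivial.'' That general principle fails. Take $G=Q_8\times C_3$ and $n=1$: here $G$ is nilpotent of class $2=n+1$, the central quotient $G/Z_1(G)\simeq C_2\times C_2$ is nontrivial, yet $\pi(G/Z_0(G))=\pi(G)=\{2,3\}$ while $\pi(G/Z_1(G))=\{2\}$. So the two index sets in question genuinely can differ. Fortunately your conclusion survives without this claim: you only need the inclusion $\pi(\gamma_{n+1}(G))=\pi(G/Z_n(G))\subseteq \pi(G/Z_{n-1}(G))$, which holds because $G/Z_n(G)$ is a quotient of $G/Z_{n-1}(G)$. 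For any prime $p_i\in\pi(G/Z_{n-1}(G))$ lying outside $\pi(\gamma_{n+1}(G))$, the primary component $(\gamma_{n+1}(G))_{p_i}$ is trivial and hence automatically cyclic, so the condition of Theorem \ref{t5} over $\pi(G/Z_{n-1}(G))$ is equivalent to cyclicity of \emph{all} primary components of $\gamma_{n+1}(G)$ (in the example above, $\gamma_2(G)\simeq C_2$ has trivial $3$-component, consistent with this). Replace the false equality by this one-line observation and your proof is complete and matches the paper's intent.
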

\begin{proof}
Take $M = \gamma_{n+1}(G)$.
\end{proof}

\begin{corollary}\label{c2}
Let $G$ be a finite non abelian $p$-group of class $(n+1)$. Then $ Aut_{Z_n(G)}^{\gamma_{n+1}(G)}(G) \simeq Inn(G/Z_{n-1}(G))$ if and only if $\gamma_{n+1}(G)$ is cyclic.
\end{corollary}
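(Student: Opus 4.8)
The plan is to obtain this as an immediate specialization of Theorem \ref{t5}, or equivalently of Corollary \ref{c1}, to the case where $G$ is a $p$-group. First I would set $M = \gamma_{n+1}(G)$. Since $G$ has nilpotency class exactly $(n+1)$, we have $\gamma_{n+1}(G) \subseteq Z(G)$ together with $\gamma_{n+1}(G) \neq \{1\}$, so $M$ is a nontrivial central subgroup of $G$ containing $\gamma_{n+1}(G)$, and the hypotheses of Theorem \ref{t5} are met with this choice of $M$.

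Next I would pin down the relevant set of primes. Because $G$ is a $p$-group, so is every section of $G$; in particular $G/Z_{n-1}(G)$ is a $p$-group. It is moreover nontrivial: the class being exactly $(n+1)$ forces $Z_n(G) \neq G$, and since $Z_{n-1}(G) \subseteq Z_n(G) \subsetneq G$ we get $Z_{n-1}(G) \subsetneq G$. Hence $\pi(G/Z_{n-1}(G)) = \{p\}$. Likewise $\gamma_{n+1}(G)$ is itself a $p$-group, so its only nontrivial primary component is the $p$-primary one, namely $M_p = \gamma_{n+1}(G)$.

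Finally I would invoke Theorem \ref{t5}. Its criterion, that $M_{p_i}$ be cyclic for each $p_i \in \pi(G/Z_{n-1}(G))$, collapses under the two observations above to the single requirement that $M_p = \gamma_{n+1}(G)$ be cyclic. Therefore $Aut_{Z_n(G)}^{\gamma_{n+1}(G)}(G) \simeq Inn(G/Z_{n-1}(G))$ if and only if $\gamma_{n+1}(G)$ is cyclic, which is exactly the assertion of the corollary. (Alternatively one may quote Corollary \ref{c1}, whose condition already reads ``$(\gamma_{n+1}(G))_p$ cyclic for every primary component''; for a $p$-group there is only the single component $\gamma_{n+1}(G)$, giving the same conclusion.)

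There is no genuine obstacle in this argument, as the real content resides in Theorem \ref{t5}. The only points deserving a moment's care are the two reductions just used: verifying that $G/Z_{n-1}(G)$ is nontrivial so that $\pi(G/Z_{n-1}(G))$ is indeed $\{p\}$ rather than empty, and noting that for a $p$-group the passage to $p$-primary components is vacuous, so that cyclicity of $M_p$ is literally cyclicity of $\gamma_{n+1}(G)$.
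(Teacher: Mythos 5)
Your proposal is correct and matches the paper's (implicit) argument: the paper derives Corollary \ref{c2} directly from Theorem \ref{t5} via Corollary \ref{c1} with $M=\gamma_{n+1}(G)$, exactly as you do. Your added checks --- that $Z_{n-1}(G)\subsetneq G$ so $\pi(G/Z_{n-1}(G))=\{p\}$, and that the $p$-primary component of $\gamma_{n+1}(G)$ is the whole group --- are the same trivial reductions the paper leaves unstated, spelled out carefully.
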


Rai's \cite[Theorem B(2)]{Rai} follows from corollary \ref{c2} by taking $n =1$. 

\vspace{0.7cm} \noindent {\bf Acknowledgement}

This research is supported by SERB-DST grant YSS/2015/001567.

\end{document}